\newtheorem{theorem}{Theorem}
\newtheorem{lemma}[theorem]{Lemma}
\newtheorem{problem}{Open Problem}
\newcommand{\codeg}{\mathrm{codeg}}
\begin{document}

\title{The existence of $k$-radius sequences}
\author{Simon R. Blackburn\\
Department of Mathematics\\
Royal Holloway, University of London\\
Egham, Surrey TW20 0EX, United Kingdom\\
\texttt{s.blackburn@rhul.ac.uk}}
\maketitle

\begin{abstract}
Let $n$ and $k$ be positive integers, and let $F$ be an alphabet of
size $n$.  A sequence over $F$ of length $m$ is a \emph{$k$-radius
sequence} if any two distinct elements of $F$ occur within distance
$k$ of each other somewhere in the sequence. These sequences were
introduced by Jaromczyk and Lonc in 2004, in order to produce an
efficient caching strategy when computing certain functions on large
data sets such as medical images.

Let $f_k(n)$ be the length of the shortest $n$-ary $k$-radius
sequence. The paper shows, using a probabilistic argument, that whenever $k$ is fixed and $n\rightarrow\infty$
\[
f_k(n)\sim \frac{1}{k}\binom{n}{2}.
\]
The paper observes that the same argument generalises to the situation when we require the following stronger property for some integer $t$ such that $2\leq t\leq k+1$: any $t$ distinct elements of $F$ must simultaneously occur within a distance $k$ of each other somewhere in the sequence.
\end{abstract}

\section{Introduction}
\label{sec:introduction}

Let $n$ and $k$ be positive integers, and let $F$ be an alphabet of
size $n$.  A sequence $a_1,a_2,\ldots,a_{m}$ over $F$ of length
$m$ is a \emph{$k$-radius sequence} if for all $x,y\in F$ there
exists $i,j\in\{1,2,\ldots,m\}$ such that $a_i=x$, $a_j=y$ and
$|i-j|\leq k$. The following is an example of an $8$-ary
$3$-radius sequence over the alphabet $F=\{0,1,2,3,4,5,6,7\}$:
\[
0,1,2,3,4,5,6,7,0,1,2,4,5,6,3,7.
\]

We write $f_k(n)$ for the length of the shortest $n$-ary $k$-radius sequence;
so the example above shows that $f_3(8)\leq 16$.

The concept of a $k$-radius sequence was introduced by Jaromczyk and
Lonc~\cite{JL}. They were interested in these sequences so they could
design an efficient caching strategy to compute a function that
depends on comparing pairs of a sequence of large data sets such as
medical images (see the discussion in Section~\ref{sec:comments} below).

Ghosh~\cite{Ghosh} showed that
\[
f_1(n)=\begin{cases} \binom{n}{2}+1&\text{when $n$ is odd;}\\
\binom{n}{2}+n/2&\text{when $n$ is even.}
\end{cases}
\]
Jaromczyk and Lonc~\cite{JL} showed that
$f_2(n)=\frac{1}{2}\binom{n}{2}+O(n^2/\log n)$, and gave a
construction for $k$-radius sequences of the right order of magnitude
but with a leading term that is not tight. Chee, Ling, Tan and
Zhang~\cite{CheeLing} provided good constructions for $n$-ary
$2$-radius sequences for small values of $n$. Blackburn and
McKee~\cite{BlackburnMcKee} showed how to construct asymptotically
good $k$-radius sequences for many values of $k$. In particular, their
constructions show that $f_{k}(n)=\frac{1}{k}\binom{n}{2}+O(n^2/\log
n)$ whenever $k\leq 194$, or $k+1$ is a prime, or $2k+1$ is a
prime. They asked whether $\lim_{n\rightarrow\infty}
f_k(n)/\binom{n}{2}$ exists and is equal to $1/k$. The main purpose of
this paper is to answer this question positively, by proving the
following theorem.

\begin{theorem}
\label{thm:main}
Let $k$ be a fixed positive integer. Then
\[
f_k(n)\sim \frac{1}{k}\binom{n}{2},
\]
as $n\rightarrow\infty$.
\end{theorem}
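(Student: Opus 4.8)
\medskip\noindent The plan is to prove the (easy) lower bound by a counting argument and the upper bound by a probabilistic covering construction in which the sequence is assembled from many short segments that cover pairs at essentially the optimal rate.

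For the lower bound, suppose $a_1,\dots,a_m$ is an $n$-ary $k$-radius sequence. For each pair $\{x,y\}\subseteq F$ choose a pair of positions $\{i,j\}$ with $\{a_i,a_j\}=\{x,y\}$ and $1\le|i-j|\le k$. Distinct pairs of $F$ are assigned distinct pairs of positions, and the number of pairs of positions at distance at most $k$ in a length-$m$ sequence is $\sum_{d=1}^{k}(m-d)\le km$. Hence $\binom n2\le km$, so $\liminf_{n\to\infty}f_k(n)/\binom n2\ge 1/k$, and everything reduces to showing $f_k(n)\le(1+o(1))\frac1k\binom n2$.

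For the upper bound, fix an integer $\ell\ge k+1$ and let $H_\ell$ be the $k$-th power of the path on $\ell$ vertices, so $e(H_\ell)=k\ell-\binom{k+1}2$. The key observation is that if $v_1,\dots,v_\ell$ are distinct symbols, then in the segment $v_1,\dots,v_\ell$ the symbols $v_i$ and $v_j$ occur within distance $k$ exactly when $1\le|i-j|\le k$; thus such a segment covers precisely the edges of a copy of $H_\ell$, and this coverage survives when the segment is embedded in a longer sequence. I would then: (i) find a family of pairwise edge-disjoint copies of $H_\ell$ in the complete graph $K_n$ on vertex set $F$ that together cover all but $o(n^2)$ of the edges of $K_n$; (ii) for each copy pick a labelling $v_1,\dots,v_\ell$ of its vertices realising it as $H_\ell$, and concatenate the resulting segments, in any order, into a sequence $S$; (iii) append the two symbols $x,y$ for every edge $\{x,y\}$ of $K_n$ not covered in step (i). Step (iii) adds $o(n^2)$ symbols, and $S$ together with this appendix is an $n$-ary $k$-radius sequence of length at most $\ell\binom n2/e(H_\ell)+o(n^2)$. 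For each fixed $\ell$ this gives $\limsup_{n\to\infty}f_k(n)/\binom n2\le \ell/\bigl(k\ell-\binom{k+1}2\bigr)$, and since the right-hand side decreases to $1/k$ as $\ell\to\infty$, the theorem follows.

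The heart of the matter — and the only genuinely probabilistic ingredient — is step (i): an almost-perfect covering of $E(K_n)$ by edge-disjoint copies of the \emph{fixed} graph $H_\ell$. I plan to obtain this from a semi-random (R\"odl nibble) hypergraph-matching theorem of Pippenger--Spencer type, applied to the auxiliary hypergraph $\mathcal H$ with vertex set $E(K_n)$ whose hyperedges are the edge-sets of the copies of $H_\ell$ inside $K_n$. Here each hyperedge has the bounded size $e(H_\ell)$; by the edge-transitivity of $K_n$ every edge lies in the same number $D=\Theta(n^{\ell-2})$ of copies of $H_\ell$, so $\mathcal H$ is $D$-regular; and any two edges of $K_n$ lie together in only $O(n^{\ell-3})=o(D)$ common copies. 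The nibble theorem then produces a matching in $\mathcal H$ missing only $o(|E(K_n)|)=o(n^2)$ vertices, which is exactly the family needed in step (i). I expect the main work to be in verifying these regularity and small-codegree estimates so that the hypotheses of the nibble are satisfied; the remaining bookkeeping — that the number of uncovered pairs is $o(n^2)$ and that $\ell/(k\ell-\binom{k+1}2)\to 1/k$ — is routine, after which one optimises over the constant $\ell$. The same scheme should also give the variant stated in the abstract: for $2\le t\le k+1$, replace $H_\ell$ by the $t$-uniform hypergraph on $\{v_1,\dots,v_\ell\}$ of all $t$-subsets lying within a window of $k+1$ consecutive positions (of which there are $(1+o(1))\ell\binom{k}{t-1}$), run the nibble in the corresponding auxiliary hypergraph on the $t$-subsets of $F$, and clean up the $o(n^t)$ leftover $t$-subsets directly, yielding sequences of length asymptotic to $\binom nt/\binom{k}{t-1}$.
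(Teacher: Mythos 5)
Your proposal is correct and follows essentially the same route as the paper: the same auxiliary hypergraph on the pairs of $F$ (with hyperedges the length-$\ell$ segments, i.e.\ copies of the $k$-th power of a path), the same degree $\Theta(n^{\ell-2})$ and codegree $O(n^{\ell-3})$ estimates feeding into the Pippenger--Spencer/R\"odl nibble, and the same optimisation of $\ell/(k\ell-\tfrac12 k(k+1))$ over $\ell$. The only (immaterial) difference is that you invoke the matching form of the nibble and clean up the $o(n^2)$ uncovered pairs by appending them, whereas the paper uses the covering form directly.
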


We use probabilistic methods, our main tool being Pippenger and Spencer's version of the Frankl-Rodl theorem on the size of the matchings in a quasi-random hypergraph~\cite{FranklRodl,PippengerSpencer} (see Section~\ref{sec:comments}).

Theorem~\ref{thm:main} is proved in the next section. In the final section of the paper we observe that the proof of this theorem can be generalised (Theorem~\ref{thm:general}) to the situation where we are interested in subsets of $t$ elements, rather than pairs of elements, from the alphabet. The final section also contains various comments and open problems.

\section{The proof of Theorem~\ref{thm:main}}
\label{sec:proof}

We begin with an elementary lemma which establishes a lower bound on
$f_k(n)$. Jaromczyk and Lonc~\cite{JL} prove a slightly stronger
version of this lemma in their paper.

\begin{lemma}
\label{lem:lower_bound}
For all positive integers $n$ and $k$,
\[
f_k(n)>\frac{1}{k}\binom{n}{2}.
\]
\end{lemma}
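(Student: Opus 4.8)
The plan is a double-counting argument on the positions of the sequence. Let $a_1,a_2,\ldots,a_m$ be an arbitrary $n$-ary $k$-radius sequence; it suffices to show $m>\frac{1}{k}\binom{n}{2}$, since $f_k(n)$ is by definition the smallest such $m$. Call an unordered pair $\{i,j\}$ with $1\le i<j\le m$ a \emph{near pair} if $j-i\le k$. I will estimate the number of near pairs from above (in terms of $m$) and from below (in terms of $n$), and then compare.

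First I would bound the number of near pairs from above. For each position $i$ there are at most $k$ indices $j$ with $i<j\le m$ and $j-i\le k$, and the position $i=m$ contributes none; hence the number of near pairs is at most $k(m-1)<km$. (A more careful count along the right-hand end of the sequence gives the sharper bound $km-\binom{k+1}{2}$ when $m>k$, but the crude estimate is all I need.) Next I would bound the number of near pairs from below: for each of the $\binom{n}{2}$ unordered pairs $\{x,y\}$ of distinct elements of $F$, the $k$-radius property supplies positions $i,j$ with $a_i=x$, $a_j=y$ and $|i-j|\le k$, so $\{i,j\}$ is a near pair with $a_i\neq a_j$. The map sending $\{x,y\}$ to such a near pair is injective, because a near pair $\{i,j\}$ determines the two symbols $a_i,a_j$, which are distinct by construction, so $\{i,j\}$ can be the image of at most one alphabet pair. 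Therefore the number of near pairs is at least $\binom{n}{2}$.

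Combining the two bounds gives $\binom{n}{2}\le k(m-1)<km$, that is, $m>\frac{1}{k}\binom{n}{2}$, which is the claimed strict inequality.

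I do not expect any genuine obstacle here: the statement is elementary, and the argument is just an averaging/pigeonhole count. The only points needing a little care are (a) the strictness of the inequality, which comes for free from the fact that position $i=m$ starts no near pair (so the upper bound is $k(m-1)$, not $km$); and (b) the observation that a single near pair can account for at most one alphabet pair, which is precisely why it matters that the two witnessing symbols are distinct. The sharper term $\binom{k+1}{2}$ available at the end of the sequence is presumably what the stronger version attributed to Jaromczyk and Lonc records; it is not required for Theorem~\ref{thm:main}, where only the bound $f_k(n)>\frac1k\binom n2$ is used.
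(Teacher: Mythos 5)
Your proof is correct and is essentially the paper's own argument: both bound the number of position pairs at distance at most $k$ by (strictly less than) $km$ and observe that all $\binom{n}{2}$ alphabet pairs must be realised among them. Your explicit remarks about injectivity and about where the strictness comes from are just slightly more detailed versions of what the paper leaves implicit.
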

\begin{proof}
Let $a_1,a_2,\ldots,a_{m}$ be an $n$-ary $k$-radius
sequence. There are less than $km$ pairs of the form
$\{a_i,a_{i+z}\}$ where $i,i+z\in\{1,2,\ldots,m\}$ and
$1\leq z\leq k$. The $k$-radius sequence property implies that
every unordered pair of alphabet symbols must occur at least once as a pair
$\{a_i,a_{i+z}\}$ for some $i$ and $z$, and so $km>
\binom{n}{2}$.
\end{proof}

To provide an upper bound on $f_k(n)$, we use a well-known theorem in
hypergraph theory. Recall that a hypergraph $\Gamma$ is
\emph{$r$-uniform} if all its hyperedges have cardinality $r$. The
\emph{degree} $\deg(v)$ of a vertex $v\in\Gamma$ is the number of
hyperedges containing $v$; the \emph{codegree} $\codeg(v,w)$ of a pair of
distinct vertices $v,w\in\Gamma$ is the number of hyperedges containing
both $v$ and $w$. A \emph{cover} is a set of hyperedges in $\Gamma$
whose union is equal to the set of all vertices of $\Gamma$.

\begin{theorem}
\label{thm:hypergraph}
Fix an integer $r$ and a positive real number $\delta$. Then there
exists an integer $n_0$ and a positive real number $\delta'$ with
the following property.

Let $\Gamma$ be an $r$-uniform hypergraph on $n$ vertices, where
$n\geq n_0$. Suppose that all vertices of $\Gamma$ have degree $d$ for
some integer $d$. Let $c=\max \codeg(u,v)$, where the maximum is taken
over all pairs of distinct vertices $u,v\in \Gamma$. If $c\leq
\delta' d$, then there exists a cover consisting of at most
$(1+\delta)n/r$ hyperedges.
\end{theorem}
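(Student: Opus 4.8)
The plan is to derive this statement directly from the Pippenger--Spencer version of the Frankl--R\"odl theorem cited in the introduction, which produces a near-perfect matching in a nearly regular, small-codegree uniform hypergraph. Concretely, that theorem says: for every integer $r\geq 2$ and every $\epsilon>0$ there exist a $\gamma>0$ and an integer $D_0$ so that any $r$-uniform hypergraph in which every vertex has degree within a factor $1\pm\gamma$ of some $D\geq D_0$, and in which every pair of distinct vertices has codegree at most $\gamma D$, contains a matching covering all but at most an $\epsilon$-fraction of the vertices. Our hypotheses are a particularly clean special case, since every vertex of $\Gamma$ has degree \emph{exactly} $d$, so the degree condition holds automatically as soon as $d\geq D_0$.

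First I would fix the parameters. Given $r$ and $\delta$, I would set $\epsilon=\delta/r$ and feed this $\epsilon$ into the matching theorem to obtain the associated $\gamma$ and $D_0$, then take $\delta'=\min\{\gamma,\,1/D_0\}$. The role of the second term is the one point that needs care: assuming (as we may) that $\Gamma$ has at least one edge, any pair of vertices lying in a common hyperedge has codegree at least $1$, so $c\geq 1$, and the hypothesis $c\leq\delta' d$ then forces $\delta' d\geq 1$, i.e.\ $d\geq 1/\delta'\geq D_0$. Thus the codegree condition secretly guarantees that the degree $d$ is large enough to apply the matching theorem, and simultaneously $c\leq\delta' d\leq\gamma d$ supplies the required codegree bound.

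Next I would extract a matching $M$ covering all but a set $U$ of at most $\epsilon n$ vertices. A matching is not yet a cover, so the second step repairs the deficiency: since every vertex has degree $d\geq 1$, each $v\in U$ lies in at least one hyperedge, and choosing one such hyperedge for each $v\in U$ adds at most $|U|\leq\epsilon n$ further hyperedges. The union of $M$ with these repair edges places every vertex of $\Gamma$ inside some chosen hyperedge, hence is a cover.

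Finally I would bound its size. The edges of $M$ are pairwise disjoint and $r$-uniform, so $r\lvert M\rvert\leq n$ and $\lvert M\rvert\leq n/r$; adding at most $\epsilon n$ repair edges yields a cover of size at most
\[
\frac{n}{r}+\epsilon n=\frac{n}{r}+\frac{\delta}{r}\,n=(1+\delta)\frac{n}{r},
\]
once $n_0$ is taken at least as large as any threshold the matching theorem imposes. The single genuine input is the Pippenger--Spencer theorem itself; granting that, the only real obstacle is the parameter bookkeeping, and in particular the observation that $\epsilon$ must be chosen as $\delta/r$ (not $\delta$) so that the $\epsilon n$ repair edges are absorbed into the $\delta n/r$ of slack, together with the check that the codegree hypothesis already forces $d$ to exceed the matching theorem's degree threshold.
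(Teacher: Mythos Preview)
Your derivation is correct, and it is precisely the route the paper indicates: the paper does not give its own proof of this theorem at all, but simply cites the R\"odl nibble literature (R\"odl, Frankl--R\"odl, F\"uredi~Theorem~8.4) and Pippenger--Spencer~Theorem~1.1 as sources from which it follows. Your reduction to the near-perfect matching statement, followed by greedily patching the $\epsilon n$ uncovered vertices with one extra hyperedge each, is the standard way to pass from ``near-perfect matching'' to ``near-perfect cover'', and your parameter bookkeeping (taking $\epsilon=\delta/r$ and using $c\geq 1$ to force $d\geq D_0$) is sound.
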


Theorem~\ref{thm:hypergraph} can be proved using a second-moment technique that Alon and Spencer~\cite{AlonSpencer} call the `R\"odl
nibble' (see~\cite{Rodl,FranklRodl} or~\cite[Theorem~8.4]{Furedi} for example). Also see Pippenger and
Spencer~\cite[Theorem~1.1]{PippengerSpencer} for a stronger
result.

\begin{proof}[Proof of Theorem~\ref{thm:main}]
We need to prove that $\lim_{n\rightarrow\infty} f_k(n)/\binom{n}{2} =
1/k$. Now $f_k(n)/\binom{n}{2}>1/k$ by
Lemma~\ref{lem:lower_bound}. Let $\epsilon$ be a fixed positive real
number. To prove the theorem, it suffices to show that for all
sufficiently large integers $n$, we have that $f_k(n)/\binom{n}{2}\leq
(1+\epsilon)/k$.

Choose an integer $\ell$ and a positive real number $\delta$ such that
$\ell\geq k$ and
\[
\frac{(1+\delta)}{1-\frac{1}{2}(k+1)/\ell}<(1+\epsilon).
\]

Let $n$ be an integer such that $n\geq \ell$, and let $F$ be a set of
cardinality $n$. Define a hypergraph $\Gamma_n$ as follows. The
vertices of $\Gamma_n$ are the $\binom{n}{2}$ unordered pairs $\{x,y\}$
where $x,y\in F$. The hyperedges of $\Gamma_n$ are the $n(n-1)\cdots
(n-(\ell-1))$ sequences $\mathbf{b}=b_1,b_2,\ldots,b_{\ell}$ of length $\ell$
over $F$ whose entries $b_i$ are all distinct. We define a vertex
$\{x,y\}$ to lie in a hyperedge $\mathbf{b}$ whenever $x$ and $y$ occur in
$\mathbf{b}$ within a distance of $k$; more precisely, whenever there
exist $i,j\in\{1,2,\ldots ,\ell\}$ such that $b_i=x$, $b_j=y$ and
$|i-j|\leq k$.

Let $r$ be the number of ways of choosing an (unordered) pair of distinct positions in a sequence of length $\ell$, where the positions are at most a distance $k$ apart. So
\[
r=(\ell-k)k+\sum_{i=0}^{k-1} i = \ell k - \tfrac{1}{2}k(k+1).
\]
Clearly $r$ does not depend on $n$.
Since the entries of $\mathbf{b}$ are distinct, every hyperedge in
$\Gamma_n$ contains exactly $r$ vertices,
and so $\Gamma_n$ is an $r$-uniform hypergraph. The degree $d$ of any vertex $v\in \Gamma_n$ is equal
to $2r (n-2)(n-3)\cdots(n-(\ell-1))$, which is of the order of
$n^{\ell-2}$. The codegree of distinct vertices $v,w\in\Gamma_n$
depends on whether $v$ and $w$ are intersecting when thought of as
pairs of elements of $F$. But in either case
$\codeg(v,w)=O(n^{\ell-3})=o(d)$. So Theorem~\ref{thm:hypergraph}
implies that for all sufficiently large integers $n$ there exists a
cover $\mathbf{b}_1,\mathbf{b}_2,\ldots,\mathbf{b}_s$ for $\Gamma_n$
consisting of $s$ hyperedges, where $s\leq (1+\delta)\binom{n}{2}/r$.

The definition of $\Gamma_n$ and the fact that the sequences
$\mathbf{b}_i$ form a cover show that the concatenation of
$\mathbf{b}_1,\mathbf{b}_2,\ldots,\mathbf{b}_s$ is a $k$-radius
sequence. The length of this sequence is $\ell s$, and
\begin{align*}
\ell s&\leq \ell(1+\delta)\binom{n}{2}/r\\
&=\frac{1}{k}\binom{n}{2}\frac{\ell(1+\delta)}{\ell-\frac{1}{2}(k+1)}\\
&=\frac{1}{k}\binom{n}{2}\frac{(1+\delta)}{1-\frac{1}{2}(k+1)/\ell}\\
&<\frac{1}{k}\binom{n}{2}(1+\epsilon).
\end{align*}
So $f_k(n)/\binom{n}{2}\leq (1+\epsilon)/k$ for all
sufficiently large integers $n$, as required.
\end{proof}

\section{Comments}
\label{sec:comments}

We have found the leading term for $f_k(n)$ as $n\rightarrow\infty$
with $k$ fixed using probabilistic methods. It would be very
interesting to search for explicit constructions of $k$-radius
sequences that are asymptotically good for any value of $k$. (The
constructions of Jaromczyk and Lonc~\cite{JL} and of Blackburn and
McKee~\cite{BlackburnMcKee} only lead to asymptotically good
constructions for some values of $k$.) The following problem would also be very interesting:

\begin{problem}
\label{prob:k_radius}
Provide an upper bound (using explicit or probabilistic
methods) of the form
\[
f_k(n)\leq \frac{1}{k}\binom{n}{2}+g(n),
\]
where $g(n)$ is a function of $n$ that grows significantly more slowly
than $n^2$.
\end{problem}

\emph{Note added in final revision:} A recent preprint of Jaromzcyk, Lonc and Truszczynski~\cite{JLT} provides some beautiful recursive constructions of $k$-radius sequences, solving Open Problem~\ref{prob:k_radius}. Indeed, they show that we may take $g(n)=O(n^{1+\epsilon})$ for any positive real number $\epsilon$. They also give optimal constructions of $2$-radius sequences when $n=2p$ with $p$ a prime.

We now discuss the caching application that motivated Jaromczyk and
Lonc in a little more detail. Suppose we have a total of $n$ medical images,
and we wish to compute some function which depends on all pairs of these
images. We assume that the computation involving each pair of images
is computationally intensive, so we wish to place these images in our
cache before carrying out this computation. We assume our cache can
hold up to $k+1$ images at one time. Then an $n$-ary $k$-radius
sequence will enable us to design an efficient caching strategy, as
follows. Let $a_1,a_2,\ldots,a_{m}$ be an $n$-ary $k$-radius
sequence. Suppose we load image $a_t$ into our cache at time $t$,
using a first-in first-out caching strategy. So at time $t$ (for
$t\geq k+1$) our cache holds the images $a_{t-k},a_{t-k+1},\ldots
,a_t$. The property of being a $k$-radius sequence implies that any
pair of alphabet symbols occurs in some window of length $k+1$ in the
sequence, and so any pair of images simultaneously lies in our cache
at some point. Short sequences correspond to efficient caching
strategies for this problem.

We might ask what the consequences are of
removing our insistence on a first-in first-out strategy in the application above. But whatever
caching strategy is used it is clear that at
most $k$ new pairs of images are introduced into our cache at every
time period: the bound of Lemma~\ref{lem:lower_bound} holds for any
caching strategy. So the results of this paper show that imposing the
restriction to a first-in first-out strategy does not
affect the asymptotic efficiency. We should also remark that when we are not imposing the restriction to a first-in first-out strategy there is a simple caching method that gives asymptotically tight results, which can be described as follows. We begin by loading the first batch of $k$ images $1,2,\ldots,k$ into the cache. Our cache can store one more image: keeping our initial batch of images in our cache, we load all the remaining images in turn. So at time $k+i$ where $1\leq i\leq n-k$, the cache holds images $1,2,3,\ldots,k$ and $k+i$. We then continue with the next batch of $k$ images $k+1,k+2,\ldots,2k$: at time $n+k+i$ where $1\leq i\leq n-2k$ the cache holds images $k+1,k+2,\ldots,2k$ and $2k+i$. We continue in this way, first loading a batch of $k$ images into our cache and then using the remaining space to load each of the later images in turn.

The results of this paper are easily generalised to a wider class of
combinatorial objects. Let $k$, $t$ and $n$ be fixed positive
integers, with $t\leq n$. Let $F$ be an alphabet of cardinality
$n$. We may define a \emph{$t$-subset $k$-radius sequence over $F$} to
be a finite sequence $a_1,a_2,\ldots ,a_m$ over $F$ such that for all
$t$-subsets $X\subseteq F$, there exists $i\in\{1,2,\ldots,m-k\}$ such
that
\[
X\subseteq\{a_i,a_{i+1},\ldots,a_{i+k}\}.
\]
So a $k$-radius sequence satisfies this definition in the special case
when $t=2$. Let $f_{t,k}(n)$ be the length of the shortest $t$-subset
$k$-radius sequence.
\begin{theorem}
\label{thm:general}
Let $k$ and $t$ be fixed integers such that $2\leq t\leq k+1$. Then
\[
f_{t,k}(n)\sim \frac{1}{\binom{k}{t-1}}\binom{n}{t}
\]
as $n\rightarrow\infty$.
\end{theorem}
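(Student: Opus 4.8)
The plan is to run the two halves of the proof of Theorem~\ref{thm:main} with pairs replaced by $t$-subsets throughout. For the lower bound I would generalise Lemma~\ref{lem:lower_bound} as follows: given a $t$-subset $k$-radius sequence $a_1,\ldots,a_m$, assign to each $t$-subset $X\subseteq F$ the least index $i=i_X$ with $X\subseteq\{a_i,\ldots,a_{i+k}\}$. If $i_X>1$ then $X$ is not contained in the window $\{a_{i_X-1},\ldots,a_{i_X+k-1}\}$, so $X$ must consist of the entry $a_{i_X+k}$ together with $t-1$ of the entries $a_{i_X},\ldots,a_{i_X+k-1}$; hence at most $\binom{k}{t-1}$ subsets $X$ share a common value of $i_X>1$. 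Counting all $\binom{n}{t}$ subsets gives $\binom{n}{t}\leq(m-k)\binom{k}{t-1}+\binom{k+1}{t}$, and since $t\geq 2$ the constant error term is negligible against $\binom{n}{t}$, so $f_{t,k}(n)\geq(1-o(1))\binom{n}{t}/\binom{k}{t-1}$.

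For the upper bound I would define a hypergraph $\Gamma_n$ whose vertices are the $\binom{n}{t}$ $t$-subsets of $F$, and whose hyperedges are the sequences $\mathbf{b}=b_1,\ldots,b_\ell$ of $\ell$ distinct elements of $F$ for a suitably large fixed $\ell$; a vertex $X$ lies in $\mathbf{b}$ precisely when $X\subseteq\{b_i,\ldots,b_{i+k}\}$ for some $i$. Because the entries of each hyperedge are distinct, a hyperedge contains exactly $r$ vertices, where $r$ is the number of $t$-subsets of positions in $\{1,\ldots,\ell\}$ of diameter at most $k$; summing $\binom{k}{t-1}$ over the possible positions of the least element and using the hockey-stick identity for the tail gives
\[
r=(\ell-k)\binom{k}{t-1}+\binom{k}{t},
\]
which recovers the formula of Theorem~\ref{thm:main} when $t=2$ and, crucially, is positive exactly because $2\leq t\leq k+1$. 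A direct count shows every vertex has the same degree $d=t!\,r\,(n-t)(n-t-1)\cdots(n-\ell+1)$, which is of order $n^{\ell-t}$, while for distinct vertices $X,Y$ one has $|X\cup Y|\geq t+1$, so $\codeg(X,Y)=O(n^{\ell-t-1})=o(d)$. Theorem~\ref{thm:hypergraph} then yields a cover $\mathbf{b}_1,\ldots,\mathbf{b}_s$ with $s\leq(1+\delta)\binom{n}{t}/r$, and concatenating these hyperedges produces a $t$-subset $k$-radius sequence of length $\ell s\leq\ell(1+\delta)\binom{n}{t}/r$. Since $\ell/r\to 1/\binom{k}{t-1}$ as $\ell\to\infty$, choosing $\ell$ large enough and $\delta$ small enough makes this at most $(1+\epsilon)\binom{n}{t}/\binom{k}{t-1}$ for any fixed $\epsilon>0$ and all large $n$, which together with the lower bound gives the theorem.

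I do not anticipate a genuine obstacle, since this is essentially a bookkeeping generalisation of Theorem~\ref{thm:main}; the only places needing care are computing the constant $r$ correctly and verifying that $\ell/r\to 1/\binom{k}{t-1}$ (equivalently, that $\binom{k}{t}/\binom{k}{t-1}=(k-t+1)/t$ is the relevant lower-order correction), and checking the codegree estimate $\codeg(X,Y)=o(d)$. Both the positivity of $r$ and the nonvanishing of $\binom{k}{t-1}$ are exactly what the hypothesis $t\leq k+1$ guarantees, so this is where that restriction is used.
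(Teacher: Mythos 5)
Your proposal is correct and follows essentially the same route as the paper: the same hypergraph on $t$-subsets with length-$\ell$ hyperedges, the degree/codegree estimates $d=\Theta(n^{\ell-t})$ and $\codeg=O(n^{\ell-t-1})$, an appeal to Theorem~\ref{thm:hypergraph}, and the same ``each new element covers at most $\binom{k}{t-1}$ new subsets'' counting for the lower bound. The only difference is that you work out explicitly the constants the paper leaves implicit (e.g.\ $r=(\ell-k)\binom{k}{t-1}+\binom{k}{t}$ rather than $r=\ell\binom{k}{t-1}+h(k,t)$), and these computations are correct.
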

\begin{proof}
The lower bound follows by observing that each new element added to a sequence can `cover' at most $\binom{k}{t-1}$ new subsets $X$ (as these new subsets must involve the new element).

The upper bound follows as in the proof of Theorem~\ref{thm:main}. So we define the hypergraph $\Gamma_n$ to have hyperedges as before, with vertices the $t$-subsets of the $n$-set $F$, and with a vertex lying in a hyperedge $\mathbf{b}$ if and only if the subset is contained in a set of $k+1$ consecutive elements of the sequence $\mathbf{b}$. The graph $\Gamma_n$ is $r$-uniform, where $r=\ell\binom{k}{t-1}+h(k,t)$ for some fixed function $h$ of $k$ and $t$. The degree of a vertex in $\Gamma_n$ does not depend on the vertex and is of the order of $n^{\ell-t}$, whereas the codegree of a pair of vertices depends on the size of intersection of the subsets the vertices are identified with, but is at most $O(n^{\ell-t-1})$. We may use Theorem~\ref{thm:hypergraph} to obtain a small cover for $\Gamma_n$, and then concatenate the resulting sequences in this cover to obtain a short $t$-subset $k$-radius sequence, just as in the proof of Theorem~\ref{thm:main}.
\end{proof}

\begin{problem}
Find good explicit constructions of $t$-subset $k$-radius sequences.
\end{problem}

This problem has been considered in the case $t=k+1$ by Lonc, Traczyk, and Truszczynski~\cite{LTT}. The authors show that $f_{k+1,k}(n)=\binom{n}{k}+O(n^{\lfloor k/2\rfloor})$, determine $f_{3,2}(n)$ exactly and determine $f_{4,3}(n)$ and $f_{6,5}(n)$ for infinitely many values of $n$.

The corresponding packing rather than covering problem is also
interesting combinatorially (although we do not know of an
application). Here we may define a \emph{packing $t$-subset $k$-radius
sequence over $F$} to be a sequence $a_1,a_2,\ldots,a_m$ over $F$ with
the property that any $t$-subset $X\subseteq F$ only occurs as a
subset of $\{a_i,a_{i+1},\ldots,a_{i+k}\}$ in at most one position in
the sequence. More precisely, we require that for all $t$-subsets
$X\subseteq F$ there exists at most one choice for an increasing sequence $z_1,z_2,\ldots,z_t$ of integers
such that
\[
X=\{a_{z_1},a_{z_2},\ldots,a_{z_t}\}
\]
and where $|z_t-z_1|\leq k$.

\begin{problem} Define $F_{t,k}(n)$ to be
the length of the longest packing $t$-subset $k$-radius sequence. Find
good asymptotic lower bounds on $F_{t,k}(n)$, either using probabilistic or
explicit constructions.
\end{problem}

This problem has been considered in the case when $t=k+1$ by Curtis, Hines, Hurlbert and Moyer~\cite{CHHM} under the name of Ucycle packings. The authors prove that for any $t$, we have
\[
F_{t,t-1}(n)=(1-o(1))\binom{n}{t}.
\]
Their work was motivated by the concept of a universal cycle; see~\cite{CDG,Hurlbert,Jackson}.

\paragraph{Acknowledgement} The author would like to thank Jason Crampton for pointing out the simple caching construction described in Section~\ref{sec:comments}, and the referees for suggesting several improvements in the exposition and bibliography.

\end{document}